\newtheorem{theorem}{\bf Theorem}[section]
\newtheorem{lemma}[theorem]{\bf Lemma}
\newtheorem{cor}[theorem]{\bf Corollary}
\newtheorem{conj}[theorem]{\bf Conjecture}
\newtheorem{claim}[theorem]{\bf Claim}
\newtheorem{obs}[theorem]{\bf Observation}
\newtheorem{fact}[theorem]{\bf Fact}
\theoremstyle{definition}
\newtheorem{remark}[theorem]{\bf Remark}
\newtheorem{defi}[theorem]{\bf Definition}
\newcommand{\HH}{\mathcal H}
\newcommand{\E}{\mathcal E}
\begin{document}

\title{Coloring linear hypergraphs: the Erd\H{o}s--Faber--Lov\'asz conjecture and the Combinatorial Nullstellensatz}
\author{Oliver Janzer\thanks{Department of Pure Mathematics and Mathematical Statistics, University of Cambridge, United Kingdom.
		E-mail: {\tt oj224@cam.ac.uk}.} \and
	Zoltán Lóránt Nagy\thanks{MTA--ELTE Geometric and Algebraic Combinatorics Research Group,
  E\"otv\"os Lor\'and University, Budapest, Hungary. The author is supported by the Hungarian Research Grant (NKFI) No. K 120154 and SNN 132625 and by the János Bolyai Scholarship of the Hungarian Academy of Sciences. 	E-mail: {\tt nagyzoli@cs.elte.hu}}} 
\date{}

\maketitle

\begin{abstract}

The long-standing Erd\H os--Faber--Lov\'asz conjecture states that every $n$-uniform linear hypergaph with $n$ edges has a proper vertex-coloring using $n$ colors. In this paper we propose an algebraic framework to the problem and formulate a corresponding stronger conjecture. Using the Combinatorial Nullstellensatz, we reduce the Erd\H os--Faber--Lov\'asz conjecture to the existence of non-zero coefficients in  certain polynomials. These coefficients are in turn related to the number of orientations with prescribed in-degree sequences of some auxiliary graphs. We prove the existence of certain orientations, which verifies a necessary condition for our algebraic approach to work.

{\bf Keywords}: coloring, hypergraphs, Erdős--Faber--Lovász, Combinatorial Nullstellensatz, graph orientations
\end{abstract}

\section{Introduction}

A hypergraph $\HH=(V, \E)$ consists of a nonempty vertex set $V$, and an edge set $\E$. A hypergraph is called {\it linear} if the intersection of each pair of edges contains at most one vertex. 
A {\em  proper vertex coloring} with a color set $C$ of the hypergraph is a function $c: V \rightarrow C$ such that each edge consists of vertices of different colors. 
A  well known conjecture of Erdős, Faber and Lovász, dating back to 1972, asserts an upper bound on the minimum number of colors.

\begin{conj}[Erdős--Faber--Lovász]\label{mainc}
If a linear hypergraph $\HH=(V, \E)$ has $n$ edges, each of size at most $n$, then $\HH$ can be colored properly by $n$ colors.
\end{conj}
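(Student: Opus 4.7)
The plan is to attack Conjecture~\ref{mainc} via Alon's Combinatorial Nullstellensatz. Fix an arbitrary linear order $<$ on $V$ and consider the edge polynomial
\[
f(x) \;=\; \prod_{E\in\E}\,\prod_{\substack{u,v\in E\\ u<v}} (x_v - x_u) \;\in\; \mathbb{Z}[x_v:\, v\in V].
\]
An assignment $c:V\to C$ with $|C|=n$ is a proper coloring of $\HH$ precisely when $f(c)\neq 0$. Taking the color set $C=\{0,1,\ldots,n-1\}$, the Nullstellensatz reduces the conjecture to finding an exponent sequence $(t_v)_{v\in V}$ with $t_v\le n-1$ for every $v$ and $\sum_v t_v = \deg f = \sum_{E\in\E} \binom{|E|}{2}$ such that the coefficient of $\prod_v x_v^{t_v}$ in $f$ is nonzero.

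To make this combinatorial, expand each edge factor via the Vandermonde identity
\[
\prod_{\substack{u,v\in E\\ u<v}} (x_v-x_u) \;=\; \sum_{\sigma_E}\mathrm{sgn}(\sigma_E) \prod_{v\in E} x_v^{\sigma_E(v)},
\]
where $\sigma_E$ runs over bijections $E\to\{0,1,\ldots,|E|-1\}$. Expanding across all edges, the coefficient of $\prod_v x_v^{t_v}$ in $f$ becomes $\sum_{(\sigma_E)} \prod_{E \in \E} \mathrm{sgn}(\sigma_E)$, summed over tuples of bijections satisfying $\sum_{E\ni v}\sigma_E(v)=t_v$ for every $v$. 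Each such tuple corresponds to an orientation of the $2$-section graph $G$ of $\HH$---well-defined as a simple graph by linearity---that is acyclic on the clique of every hyperedge and in which vertex $v$ has in-degree $t_v$. The problem is therefore equivalent to finding a feasible sequence $(t_v)$ with $t_v\le n-1$ for which the \emph{signed} count of such orientations is nonzero.

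A two-stage attack is natural. First, show that there exists \emph{some} tuple of bijections realizing an in-degree sequence bounded uniformly by $n-1$; this is a necessary condition for the Nullstellensatz approach, and should be accessible via Hall's theorem or a network-flow argument on the bipartite incidence structure between vertices and edge-slots. Second---and the main obstacle---show that the corresponding signed sum does not cancel to zero. Since this latter step is equivalent to EFL itself, it will likely demand a decisive new idea: a parity-preserving involution on the set of non-distinguished tuples, an inductive reduction by removing a low-degree vertex, or exhibition of a single orientation whose sign cannot be matched by any other. Absent such an idea, one can still make partial progress by verifying the first stage unconditionally (the contribution of the present paper) or by checking extremal configurations such as the near-pencil, where the in-degree bound forces essentially a unique tuple and the signed count is trivially $\pm 1$.
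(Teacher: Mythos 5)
The statement you are addressing is a conjecture, and the paper does \emph{not} prove it. The paper proposes an algebraic framework (auxiliary graphs $G_1(\HH),G_2(\HH)$ and polynomials $P_1,P_2$), formulates a stronger conjecture (Conjecture~\ref{polisejtes}) that would imply Erd\H{o}s--Faber--Lov\'asz, and proves only a necessary condition for that framework to succeed, namely the existence of Vandermonde-completable orientations (Theorems~\ref{multiedges} and \ref{mainP2}). Your proposal likewise does not resolve the conjecture and says so explicitly, so the useful comparison is between the two frameworks, not between two proofs.

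Your setup is a genuinely different route. You take one variable per vertex of $\HH$ and use the graph polynomial of the $2$-section (conflict) graph, $f=\prod_{E}\prod_{u<v\in E}(x_v-x_u)$, so the Vandermonde factors of overlapping cliques interact directly through shared variables. The paper instead takes one variable per vertex--hyperedge incidence ($n^2$ variables in the $n$-uniform case), builds $n$ \emph{pairwise disjoint} base cliques $Q_i$, and encodes the requirement that copies of a vertex be equal via separate identifier factors $R_{i,k}$ or $\Phi_{i,k}$. The payoff of the paper's extra variables is decoupling: after applying the Vandermonde expansion (Fact~\ref{vander}) each base clique contributes an independent permutation, so the entire combinatorial difficulty is pushed into orienting the identifier edges, and the necessary condition becomes a clean per-clique ``Vandermonde-completability'' statement. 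In your formulation the analogous necessary condition is the coupled inequality $\sum_{E\ni v}\sigma_E(v)\le n-1$ for every $v$, where the $\sigma_E$ must simultaneously be bijections $E\to\{0,\dots,|E|-1\}$. You assert this first stage ``should be accessible via Hall's theorem or a network-flow argument'' but do not prove it; note also that what the paper actually proves (Theorems~\ref{multiedges}, \ref{mainP2}) is Vandermonde-completability for $G_1(\HH)$ and $G_2(\HH)$, which is \emph{not} the same statement as your first stage and does not obviously transfer to the $2$-section setting, so attributing ``the contribution of the present paper'' to your stage one is a mild mischaracterization. Your formulation is more economical and arguably the one a reader would first write down; the paper's heavier machinery is presumably a deliberate choice to make the clique part trivial. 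Both routes hit the same wall---showing the relevant signed sum of orientations does not vanish---and neither your proposal nor the paper offers more on this than the easy special cases you both mention (near-pencils, high minimum degree, many disjoint edge pairs).
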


Note that the statement follows if one considers only linear hypergraphs such that every vertex is incident to at least $2$ hyperedges, according to the observation below.

\begin{obs} Consider  a hypergraph $\HH=(V, \E)$ in which every edge has size at most $n$. If one deletes the vertices of degree $1$, any proper coloring of the obtained { \em derived  hypergraph} with at least $ n$ colors can be extended to a proper coloring of $\HH=(V, \E)$ with the same color set. 
\end{obs}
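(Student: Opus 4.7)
The plan is to extend the given proper coloring of the derived hypergraph $\HH'$ vertex-by-vertex using a greedy argument applied edge-by-edge. Since every deleted vertex has degree $1$, it belongs to a unique edge of $\HH$, so decisions about different edges do not interfere with each other.

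More precisely, I would fix a proper coloring $c'$ of $\HH'$ using a color set $C$ with $|C| \geq n$, and process each edge $e \in \E$ independently. Write $e = \{v_1, \dots, v_k\}$ with $k \leq n$, and suppose the vertices $v_1, \dots, v_j$ are the ones of degree $1$ (hence deleted), while $v_{j+1}, \dots, v_k$ remain in $\HH'$ and already carry colors $c'(v_{j+1}), \dots, c'(v_k)$, which are pairwise distinct because $c'$ is proper on the derived edge $e \cap V(\HH')$. The task is to assign colors to $v_1, \dots, v_j$ so that the full edge $e$ receives $k$ distinct colors.

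The number of forbidden colors (those already used on $v_{j+1}, \dots, v_k$) is $k - j$, so the number of colors still available is at least $|C| - (k-j) \geq n - (k-j) \geq j$. Hence one can pick $j$ distinct colors from the available pool and assign them to $v_1, \dots, v_j$ in any order. This makes $e$ properly colored. Because each $v_i$ with $i \leq j$ has degree $1$ in $\HH$, it lies in no other edge, so this local choice cannot create a conflict in any other edge, and the assignments for different edges are independent.

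There is essentially no obstacle: the argument is a one-line counting observation (at most $k-j \leq n-j$ colors are blocked, leaving room for $j$ fresh distinct colors), combined with the key structural fact that deleted vertices live in a single edge. The only thing to double-check is the degenerate case where $j = k$ (all vertices of $e$ have degree $1$), in which case the inequality reads $|C| \geq n \geq k = j$ and we simply color the $k$ vertices of $e$ with any $k$ distinct colors from $C$.
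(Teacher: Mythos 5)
Your proof is correct, and it is essentially the standard argument for this observation (the paper itself states it without proof, as it is routine): each degree-$1$ vertex lies in a unique edge $e$, and since $|e|\leq n\leq |C|$ there are always enough spare colors to complete $e$ to a rainbow edge, independently across edges. The counting $|C|-(k-j)\geq n-(k-j)\geq j$ is exactly the right bookkeeping, and you correctly identified that independence of the choices follows from the degree-$1$ hypothesis.
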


Erdős himself considered this one of his three favourite combinatorial problems, and offered one of his highest prizes ever for a proof or disproof \cite{erdos}. By dualizing the problem, its connection to Vizing's theorem becomes clear. To this end, one may assign vertices to the edges  of $\HH$ and introduce the dual hypergraph $\hat{\HH}$ with hyperedges $H_v$ assigned to each vertex $v\in V(\HH)$ such that  $H_v$  consists of the vertices corresponding to hyperedges incident to $v$ in  $\HH$. If  $\HH$ is linear with $n$ edges, then the resulting hypergraph  $\hat{\HH}$  on $n$ vertices is linear as well. Thus this way we get another variant of the conjecture.

\begin{conj}[Erdős--Faber--Lovász, 2nd (dual) variant]\label{mainc2}
Any linear hypergraph on $n$ vertices has chromatic index  at most $n$.
\end{conj}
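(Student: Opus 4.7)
The plan is to attack Conjecture~\ref{mainc2} via Alon's Combinatorial Nullstellensatz. To each edge $e$ of the linear hypergraph $\HH$ I assign a variable $x_e$, representing the color to be given to $e$, and form
\[
P_\HH(\mathbf{x}) \;=\; \prod_{v \in V(\HH)} \;\prod_{\substack{e,f \ni v \\ e < f}} (x_f - x_e),
\]
where $<$ is any fixed ordering of $E(\HH)$. A proper edge-coloring using $n$ colors corresponds exactly to a substitution $x_e \in \{1,\dots,n\}$ at which $P_\HH$ does not vanish. By the Nullstellensatz, such a substitution exists provided $P_\HH$ contains a monomial $\prod_e x_e^{a_e}$ with nonzero coefficient satisfying $0 \le a_e \le n-1$ for every $e$ and $\sum_e a_e = \deg P_\HH = \sum_v \binom{d_v}{2}$. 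Thus the coloring problem reduces to an algebraic coefficient question on $P_\HH$.

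Next, I would recast this coefficient question in the Alon--Tarsi spirit. The polynomial $P_\HH$ is the graph polynomial of the line graph $G := L(\HH)$, which by linearity of $\HH$ decomposes edge-disjointly into cliques $K_{d_v}$, one per vertex $v \in V(\HH)$. For any reference orientation $D$ of $G$, the Alon--Tarsi formula gives
\[
\bigl[\,\textstyle\prod_e x_e^{d^+_D(e)}\,\bigr] \, P_\HH \;=\; \pm\bigl(|\mathcal{E}_{\mathrm{even}}(D)| - |\mathcal{E}_{\mathrm{odd}}(D)|\bigr),
\]
where $\mathcal{E}(D)$ is the collection of Eulerian sub-digraphs of $D$, split by parity of arc count. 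Hence Conjecture~\ref{mainc2} would follow from producing an orientation $D$ of $L(\HH)$ with $\max_e d^+_D(e) \le n-1$ for which $|\mathcal{E}_{\mathrm{even}}(D)| \ne |\mathcal{E}_{\mathrm{odd}}(D)|$.

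This splits the program into two tasks. The first, which is the ``necessary condition'' highlighted in the abstract and which I expect to be the more accessible step, is to exhibit \emph{any} orientation of $L(\HH)$ with maximum out-degree at most $n-1$; this is a purely hypergraph-orientation problem, attackable by a greedy or deficiency-type argument that exploits the clique decomposition of $G$ and the bound $\sum_{v \in e}(d_v - 1)$ on the degree of $e$ in $G$. The second, decisive step is to control the cancellation between $|\mathcal{E}_{\mathrm{even}}(D)|$ and $|\mathcal{E}_{\mathrm{odd}}(D)|$. The cleanest scenario would be an acyclic $D$, since then $\mathcal{E}(D) = \{\emptyset\}$ and the signed count equals $\pm 1$; unfortunately $L(\HH)$ need not be $(n-1)$-degenerate in general (for instance, when $\HH = K_4$ is viewed as a $2$-uniform linear hypergraph, $L(\HH) = K_{2,2,2}$), so global acyclicity is out of reach. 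The main obstacle will therefore be to design $D$ together with a sign-reversing involution on the non-empty Eulerian sub-digraphs of $D$ whose unique fixed point is the empty sub-digraph. This is the delicate combinatorial core of the approach, and it is where the full strength of linearity --- in particular the fact that the cliques $K_{d_v}$ pairwise share at most one edge of $G$ --- must be deployed to rule out residual cancellation.
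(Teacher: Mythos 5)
First, be clear about what you were asked to prove: Conjecture~\ref{mainc2} \emph{is} the Erd\H{o}s--Faber--Lov\'asz conjecture, which the paper does not prove --- it is stated as a conjecture, and the paper's actual theorems only verify a necessary condition for an algebraic attack to succeed. Your proposal likewise is not a proof. The Nullstellensatz/Alon--Tarsi framework you set up is sound as far as it goes: $P_\HH$ is the graph polynomial of the line graph $L(\HH)$, and a monomial $\prod_e x_e^{d^+_D(e)}$ with all exponents at most $n-1$ and nonzero coefficient would indeed yield a proper coloring with $n$ colors. But the entire difficulty is concentrated in the step you defer to the end --- producing an orientation $D$ with $\max_e d^+_D(e)\le n-1$ for which $|\mathcal{E}_{\mathrm{even}}(D)|\ne|\mathcal{E}_{\mathrm{odd}}(D)|$ --- and you give no construction, no candidate sign-reversing involution, and no argument that one exists. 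As you yourself observe, acyclic orientations are unavailable in general, so the signed count genuinely can cancel; without controlling it the argument establishes nothing. The ``decisive step'' of your outline is exactly the open problem, and the proposal stops where a proof would have to begin. Even your ``accessible'' first task (an orientation of $L(\HH)$ with maximum out-degree at most $n-1$) is asserted rather than proved; it requires a Hall/deficiency-type verification that every subgraph $H$ of $L(\HH)$ satisfies $|E(H)|\le (n-1)|V(H)|$, which you do not supply.

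It is still worth comparing your program with the paper's, since they are genuinely different reductions. You work in the dual setting of Conjecture~\ref{mainc2}: edge coloring, the line graph with its edge-disjoint clique decomposition, and the classical Alon--Tarsi count of Eulerian subdigraphs. The paper works in the primal setting of Conjecture~\ref{mainc}: it builds auxiliary graphs $G_1(\HH)$ and $G_2(\HH)$ out of $n$ disjoint base cliques $K_n$ plus identifier edges, and exploits the Vandermonde structure of the clique factors $Q_i$ (Fact~\ref{vander}) to strip the base cliques out of the coefficient computation, reducing everything to orientations of the identifier edges alone via the notion of Vandermonde-completability. Its Theorems~\ref{multiedges} and~\ref{mainP2} establish only the analogue of your first task (a suitably degree-bounded orientation exists), and its Conjecture~\ref{polisejtes} is the analogue of your unproved cancellation step. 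Both routes reduce EFL to a nonvanishing-coefficient statement and neither closes the gap; the realistic contribution available from your outline is the orientation-existence half, not the conjecture itself.
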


 The conjecture is confirmed for certain hypergraph families, but  the problem is still widely open, even though asymptotic and fractional versions were established by Kahn and Seymour \cite{Kahn, KS, Seymour}. 
Some notable hypergraph families for which the conjecture is confirmed are the {\em dense} derived hypergraphs for which the minimum degree $\delta(\HH)$ is greater than $\sqrt{n}$ \cite{arroyo}, the uniform derived hypergraphs \cite{Faber} or the cases $n\leq 12$ \cite{Hindman, Romero3} and some other families \cite{Araujo, Faber2, Jackson, Mitchem, Romero2}. These results mostly apply algorithmic and graph theoretic arguments some with computer-based search. For further results on the topic, we refer to \cite{Romero1}.



In this paper we propose an algebraic approach, in connection with the celebrated Combinatorial Nullstellensatz of Alon \cite{Alon}. We point out that the existence of a suitable proper coloring of a hypergraph $\HH$ is strongly connected to the existence of a  particular degree-bounded orientation of certain auxiliary graphs obtained from $\HH$.  In Section $2$ we introduce the algebraic tool and present  two types of auxiliary graphs assigned to the linear hypergraphs. The application of the algebraic tool will imply that if the total sum  of certain signed bounded-degree orientations of the auxiliary graph is nonzero, then there exists a proper coloring with at most $n$ colors. We formulate a conjecture that in fact, this related stronger variant of the Erdős--Faber--Lovász conjecture also holds. 
In Section $3$ we study the strengthened variant of Conjecture \ref{mainc} and confirm it in a weak sense by
showing that a special, so-called Vandermonde-type, orientation exists for  both families of auxiliary graphs assigned to arbitrary $n$-uniform hypergraphs $\HH$  with $n$ edges. This verifies a necessary condition for our algebraic approach to work.  
Finally, in Section $4$ we give some concluding remarks.




\section{The algebraic tool and the strengthening of the E--F--L conjecture}

Our starting point is  Alon's celebrated Combinatorial Nullstellensatz \cite{Alon}, more precisely the Non-vanishing lemma, described below. This tool turned out to be very powerful in several areas of combinatorics; in particular, in graph coloring problems \cite{Alon, Kaul, Murthy, Zhu}.
The connection of graph orientations and this lemma appeared first in the influential paper of Alon and Tarsi \cite{AlonTarsi}.
Here we recall the form of the  Combinatorial Nullstellensatz that we will apply.

\begin{theorem}[Combinatorial Nullstellensatz, Non-vanishing lemma \cite{Alon}]\label{nonvanish}  Let $\mathbb{F}$ be an arbitrary field and let $P=P(x_1, \ldots, x_k)$ be a polynomial of $k$ variables over  $\mathbb{F}$. Suppose that there exists a monomial $\prod_{i=1}^k {x_i^{d_i}}$, such that the sum  $\sum_{i=1}^k {{d_i}}$ equals the total degree of $P$, and the coefficient of $\prod_{i=1}^k {x_i^{d_i}}$ in $P$ is nonzero. Then for any set of subsets $A_1, \ldots, A_k $ of $ \mathbb{F}$ such that $ |A_i| >d_i$, there exists a $k$-tuple $(s_1, s_2, \ldots, s_k)\in \bigtimes A_i$ for which $P(s_1, s_2, \ldots, s_k)\neq 0$.
\end{theorem}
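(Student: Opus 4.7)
The plan is to prove this statement by induction on $k$, the number of variables, mirroring the classical proof of Alon's result. The base case $k=1$ reduces to the fundamental fact that a nonzero univariate polynomial of degree $d_1$ over a field has at most $d_1$ roots: the hypothesis ensures the leading coefficient of $P(x_1)$ is nonzero, so $P$ is nonzero with at most $d_1$ roots, and because $|A_1| > d_1$ at least one element of $A_1$ is a non-root.

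For the inductive step I would decompose the polynomial with respect to the last variable, writing
\[
P(x_1, \ldots, x_k) = \sum_{j=0}^{d_k} x_k^{\,j}\, P_j(x_1, \ldots, x_{k-1}).
\]
The key observation is that the coefficient of the distinguished monomial $\prod_{i=1}^k x_i^{d_i}$ in $P$ coincides with the coefficient of $\prod_{i=1}^{k-1} x_i^{d_i}$ in $P_{d_k}$, and the total degree of $P_{d_k}$ is exactly $\sum_{i=1}^{k-1} d_i$ (this uses the hypothesis that $\sum_i d_i$ equals the total degree of $P$, ensuring that no higher-degree terms in $P_{d_k}$ can appear). Hence the induction hypothesis applies to $P_{d_k}$ with the sets $A_1, \ldots, A_{k-1}$, yielding $(s_1, \ldots, s_{k-1}) \in A_1 \times \cdots \times A_{k-1}$ for which $P_{d_k}(s_1, \ldots, s_{k-1}) \neq 0$.

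I would then finish by fixing these values and considering the univariate polynomial $Q(x_k) := P(s_1, \ldots, s_{k-1}, x_k)$. By construction its degree is exactly $d_k$, with leading coefficient $P_{d_k}(s_1, \ldots, s_{k-1}) \neq 0$, so applying the base case to $Q$ produces $s_k \in A_k$ with $Q(s_k) \neq 0$, which is the desired $k$-tuple.

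The main obstacle, modest as it is, lies in correctly tracking degrees through the decomposition: one must verify that the hypothesis $\sum_i d_i = \deg P$ forces $P_{d_k}$ to have total degree exactly $\sum_{i=1}^{k-1} d_i$, so that the induction can be invoked with precisely the right parameters rather than with a smaller target degree. An alternative route would proceed by reducing $P$ modulo the polynomials $g_i(x_i) = \prod_{a \in A_i}(x_i - a)$ to obtain a representative of degree less than $|A_i|$ in each variable, showing the distinguished coefficient survives this reduction, and then invoking the multivariate principle that a polynomial of individual degree $< |A_i|$ vanishing on the full grid must be identically zero; however, the direct inductive argument outlined above is cleaner and self-contained.
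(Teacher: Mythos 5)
The paper does not prove this statement; it is quoted from Alon's original article, so there is no internal proof to compare against. Your proposed inductive argument, however, has a genuine gap.

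The decomposition $P(x_1,\ldots,x_k)=\sum_{j=0}^{d_k} x_k^{\,j}\,P_j(x_1,\ldots,x_{k-1})$ silently assumes that the $x_k$-degree of $P$ is at most $d_k$. Nothing in the hypotheses guarantees this. The assumption is only that the \emph{total} degree of $P$ equals $\sum_i d_i$ and that the coefficient of $\prod_i x_i^{d_i}$ is nonzero; individual variables can occur to higher powers in other monomials. For a concrete example take $P(x_1,x_2)=x_1x_2-x_2^2+x_2$ with $d_1=d_2=1$ and $A_1=A_2=\{0,1\}$: here $\deg_{x_2}P=2>d_2$. The correct decomposition runs up to $D=\deg_{x_k}P$, and for $j>d_k$ the polynomial $P_j$ (here $P_2\equiv-1$) may well be nonzero. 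Consequently, after you fix $(s_1,\ldots,s_{k-1})$ so that $P_{d_k}(s_1,\ldots,s_{k-1})\neq 0$, the univariate $Q(x_k)=P(s_1,\ldots,s_{k-1},x_k)$ can have degree strictly larger than $d_k$, with a leading coefficient $P_D(s_1,\ldots,s_{k-1})$ that you have not controlled. Then $|A_k|>d_k$ is no longer enough to find a non-root of $Q$ in $A_k$; indeed in the example above, choosing $s_1=0$ gives $Q(x_2)=-x_2^2+x_2$, which vanishes on all of $A_2=\{0,1\}$. Your claim that ``By construction its degree is exactly $d_k$'' is therefore false in general, and the induction on $k$ breaks.

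There are two standard repairs. One is the alternative you sketch at the end: reduce $P$ modulo $g_i(x_i)=\prod_{a\in A_i}(x_i-a)$ to a representative of individual degree $<|A_i|$ in each $x_i$, observe that the distinguished coefficient is unchanged (since $d_i<|A_i|$), and invoke the fact that a polynomial of individual degree $<|A_i|$ that vanishes on all of $\prod A_i$ must be identically zero. This is Alon's original proof and is sound. The other is Michałek's short induction, but it inducts on $\deg P=\sum_i d_i$ rather than on $k$: assuming WLOG $d_1\geq 1$, fix $a\in A_1$, write $P=(x_1-a)\,Q+R$ with $R$ independent of $x_1$, note that the coefficient of $x_1^{d_1-1}x_2^{d_2}\cdots x_k^{d_k}$ in $Q$ equals the distinguished coefficient of $P$, apply the inductive hypothesis to $Q$ with $A_1\setminus\{a\}$ in place of $A_1$, and then observe that either $P$ is nonzero at the point produced or $R$ (and hence $P(a,\cdot)$) is nonzero there. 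Your per-variable induction cannot be salvaged without one of these ideas, because it loses control of the $x_k$-degree of the specialized polynomial.
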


In most applications of the Combinatorial Nullstellensatz, the polynomial can be directly derived from the combinatorial setting, and the choice of the maximal monomial with which Theorem \ref{nonvanish} is applied is also natural.
The main step to make the argument work is to check that the coefficient of this monomial is not zero.  In fact, one usually knows or conjectures in advance the extremal structure, which can be helpful in setting up the corresponding polynomials and verifying that the coefficient in view is nonzero. Unlike in those cases, here we have large freedom to consider a suitable polynomial, and we have to pick the polynomial and its maximal monomial carefully so that the coefficient is surely nonzero. This provides a rather novel application of the main tool.

Let us continue by setting the main notations. For a graph or hypergraph $\HH$, $d(v)$ denotes the degree of the vertex $v$. 
A monomial $\prod_j y_j^{\alpha_j}$ of a multivariate polynomial $Q({\bf y})$ is a {\em $t$-bounded degree monomial} if the degree of each variable $y_j$ is bounded from above by $t$, i.e. $\alpha_j\leq t$. The total degree of a polynomial $Q$ is denoted by $\deg(Q)$.

Our aim is to set up a multivariate polynomial where the variables correspond to vertices of $\HH$ and the values taken by the variables correspond to colors. The polynomial encodes the coloring constraints of the hypergraph $\HH$. In order to do this, we assign an auxiliary graph $G(\HH)$ first to the linear hypergraph $\HH$. We note that we shall propose two different kinds of polynomial that can be used to encode the colouring constraints.

\subsection{Setting up polynomials corresponding to proper colorings}
From now on, speaking about a linear hypergraph $\HH$ we always assume that it has $n$ hyperedges  $\E=\{F_1, F_2, \ldots, F_n\}$ of size $n$, unless specified otherwise. 

We start with introducing {\em two kinds of  auxiliary graph, $G_1(\HH)$  and $G_2(\HH)$ }  assigned to the hypergraph $\HH$. They correspond to two separate approaches to Conjecture \ref{mainc}. Both graphs consist of $n$ vertex-disjoint cliques of size $n$, together with a set of so-called \emph{identifier edges} joining vertices from different cliques. We remark in advance that the identifier edges are not uniquely determined by the hypergraph $\HH$; we have some freedom how to choose them.

\begin{defi}\label{aux1} The   {\em auxiliary graph of first kind } $G_1(\HH)$  assigned to $\HH$ is defined as follows. We take $n$ copies  $K_n^{(i)}$  ($i=1,\ldots, n$) of the complete graph $K_n$, where the  vertices of  $K_n^{(i)}$ are labelled by the vertices  of $F_i$, see Figure \ref{Fig:1}. We call them the {\em  base cliques}. Here each vertex $v\in V(\HH)$ appears $d(v)$ times, and for each $v\in V(\HH)$ we choose an arbitrary spanning tree on the set of those $d(v)$ vertices in $G_1(\HH)$ which are labelled by $v$. We call these \emph{identifier spanning trees}. For each edge of these spanning trees, we define an edge of multiplicity $n-1$ in $G_1(\HH)$, see Figure \ref{Fig:2}. We call these new edges {\em identifier edges}. The edge set of $G_1(\HH)$ consists of the edges of the base cliques and the identifier edges.
\end{defi}

\begin{defi}\label{aux2} The {\em auxiliary graph of second  kind } $G_2(\HH)$  assigned to $\HH$ is defined as follows. We take $n$ copies  $K_n^{(i)}$  ($i=1,\ldots, n$) of the complete graph $K_n$, where the  vertices of  $K_n^{(i)}$ are labelled by the vertices  of $F_i$, see Figure \ref{Fig:1}. We call them the base cliques. Here each vertex $v\in V(\HH)$ appears $d(v)$ times, and for each $v\in V(\HH)$ we choose an arbitrary identifier spanning tree on the set of those $d(v)$ vertices in $G_2(\HH)$ which are labelled by $v$. For each edge $v_{i,j}v_{k,l}$ of the spanning tree, we either take the set $\{v_{i,j}v_{k,t}: t\neq l\}$ or the set $\{ v_{i,t}v_{k,l}: t \neq j\}$ to be edges of $G_2(\HH)$. 
We call these new edges {\em identifier edges } and they have  multiplicity $1$, see Figure \ref{Fig:2}. The edge set of the auxiliary graph consists of the edges of the base cliques and the identifier edges.
\end{defi}


\begin{remark}
    In what follows, when $v_{i,j}v_{k,l}$ is an edge in an identifier spanning tree and $i<k$, then we shall always take the set $\{v_{i,j}v_{k,t}:t\neq l\}$ to be the corresponding edges of $G_2(\HH)$.
\end{remark}

\begin{figure}[h!]
    \centering
    \includegraphics[width=0.6\linewidth]{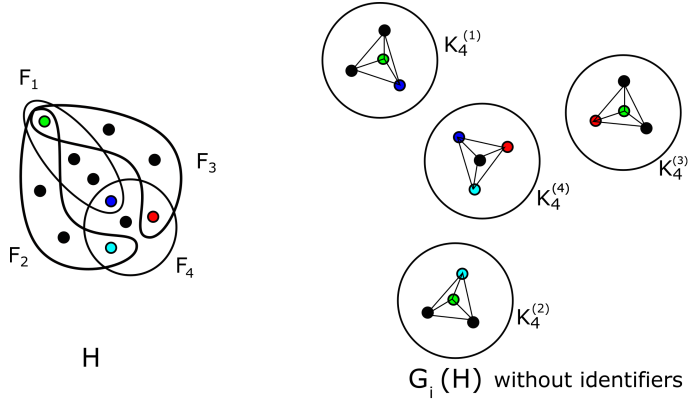}
    \caption{$\HH$ and the base cliques of $G_i(\HH)$, $(i\in \{1,2\})$ }
    \label{Fig:1}
\end{figure}

\begin{figure}[h!]
    \centering
    \includegraphics[width=0.6\linewidth]{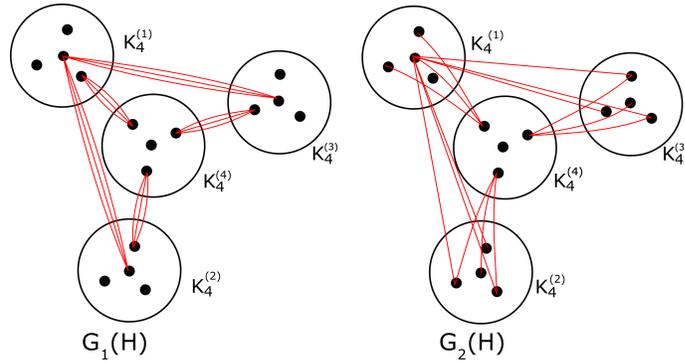}
    \caption{Identifier edges in auxiliary graphs $G_1(\HH)$ and $G_2(\HH)$}
    \label{Fig:2}
\end{figure}

\begin{remark} Informally, the identifier edges are defined as follows. For a pair of intersecting hyperedges in $\HH$, their common vertex $v$ has a copy corresponding to each of the two edges. These two copies are either joined in the corresponding identifier spanning tree or not. If they are, then in $G_1(\HH)$
we put an identifier edge with multiplicity $n-1$ between them, while in $G_2(\HH)$ we put a set of identifier edges forming a star with $n-1$ leaves whose centre is one of the two copies of $v$ and whose leaves are all vertices in the other base clique, apart from the copy of $v$.
\end{remark}

\bigskip

A suitable coloring for $\HH$ with  $n$ distinct elements of  a field $\mathbb{F}$ possesses the following properties:

\begin{itemize}

\item[(P1)] Every clique $F_i$ contains all of the colors (once).

\item[(P2)] Vertices from different cliques which correspond to the same vertex in $\HH$ are assigned the same color.
\end{itemize}

In terms of the auxiliary graphs, a suitable vertex-colouring of $\HH$ corresponds to a vertex-coloring of $G_i(\HH)$ ($i\in \{1,2\}$) in which any two vertices in the same base clique have different colors, while any two vertices joined by an edge in an identifier spanning tree have the same color.

In order to determine whether a given coloring is suitable or not, we assign a variable $x_{i,j}$ to each vertex $v_{i,j}$ ($j=1, \ldots ,n$) of the base cliques $K_n^{(i)}$. These variables will take one of $n$ possible values from $\mathbb{F}$ corresponding to the $n$ colors we can use on the vertices. Moreover, we define the following three families of polynomials.


For any $1\leq i\leq n$, let $$Q_i(\textbf{x})=\prod_{1\leq j<j'\leq n}(x_{i,j}-x_{i,j'}).$$

For any $1\leq i<k\leq n$, if there exist some $v_{i,j}\in K_n^{(i)}$ and $v_{k,l}\in K_n^{(k)}$ which form an edge in an identifier spanning tree, then let

$$R_{i,k}(\textbf{x})=({x_{i,j}-x_{k,l}})^{n-1}-1.$$

(Note that, as $\HH$ is linear, there is at most one such pair of vertices.) Otherwise, let $R_{i,k}(\textbf{x})=1$.

Similarly, if there exist some $v_{i,j}\in K_n^{(i)}$ and $v_{k,l}\in K_n^{(k)}$ which form an edge in an identifier spanning tree, then let

$$\Phi_{i,k}(\textbf{x})=\frac{\prod_{m}(x_{i,j}-x_{k,m})}{x_{i,j}-x_{k,l}},$$ 

and otherwise let $\Phi_{i,k}(\textbf{x})$=1.

Define $P_1(\textbf{x})$ and   $P_2(\textbf{x})$ as follows:

$$ P_1(\textbf{x}) =\prod_{i=1}^n Q_i(\textbf{x})\cdot\prod_{1\leq i<k\leq n}R_{i,k}(\textbf{x}),$$

$$ P_2(\textbf{x}) =\prod_{i=1}^n Q_i(\textbf{x})\cdot\prod_{1\leq i<k\leq n}\Phi_{i,k}(\textbf{x}).$$

Observe that $P_j(\textbf{x})$ is not uniquely determined yet (it depends on the choice of the identifier spanning trees), but its total degree can be expressed by the degree profile of the hypergraph as

  \begin{equation*}\label{eq:deg}
  \deg P_j(\textbf{x})=n\binom{n}{2}+\sum_{v\in V(\HH)}(d(v)-1)(n-1).
   \end{equation*}
   
In order to apply the Non-vanishing lemma (Theorem \ref{nonvanish}), we have to set the field  $\mathbb{F}$. When $n=p$ is a prime, let $P_1(\textbf{x})$ be viewed as a polynomial over $\mathbb{F}=\mathbb{F}_p$. 
For arbitrary $n$, let $P_2(\textbf{x})$  be viewed as a polynomial over $\mathbb{F}=\mathbb{R}$.

Now we can formulate our first contribution which provides an algebraic framework to the main problem.
  
   \begin{conj}\label{polisejtes}
Let $\HH$ be an $n$-uniform linear hypergraph with $n$ edges.
 \begin{enumerate}
 \item[($a$)] One can choose the identifier spanning trees in a way that $P_2(\textbf{x})$   has an $(n-1)$-bounded degree  maximal monomial with nonzero coefficient.
 \item[($b$)] When $n$ is a prime, one can choose the identifier spanning trees in a way that $P_1(\textbf{x})$   has an $(n-1)$-bounded degree  maximal monomial with nonzero coefficient.
 \end{enumerate}
   \end{conj}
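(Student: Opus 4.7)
The plan is to apply the standard Alon--Tarsi interpretation of monomial coefficients in products of linear factors. Every factor appearing in $P_1(\textbf{x})$ and $P_2(\textbf{x})$ has the form $(x_u-x_v)$, where $uv$ ranges over the edges of the auxiliary multigraph $G_1(\HH)$ or $G_2(\HH)$ respectively. In the case of $P_1$ we first discard the $-1$ term in each $R_{i,k}=(x_{i,j}-x_{k,l})^{n-1}-1$, which is legitimate because only the top-degree part of $R_{i,k}$ can contribute to a monomial of total degree equal to $\deg P_1$. Expanding each factor $(x_u-x_v)$ as $x_u$ or $-x_v$ is tantamount to orienting the corresponding edge, and the coefficient of a fixed $(n-1)$-bounded monomial $\prod x_{i,j}^{d_{i,j}}$ equals, up to a global sign fixed by a reference orientation, the signed sum $\sum_D \operatorname{sgn}(D)$ taken over all orientations $D$ of $G_j(\HH)$ whose in-degree function matches $(d_{i,j})$.

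The candidate maximal monomial I would pursue is a \emph{Vandermonde-type} one: within each base clique $K_n^{(i)}$ order the vertices so that the clique edges give in-degrees $0,1,\dots,n-1$, and allocate the contributions of the identifier edges in a canonical way (bundled at one endpoint in $G_1(\HH)$, spread across the star in $G_2(\HH)$). Section~3 of the paper establishes that at least one orientation realizing such an in-degree sequence exists for both auxiliary graphs, which is the necessary first step. The remaining task is to show that the associated signed orientation count is nonzero, in $\mathbb{F}_p$ for part $(b)$ and in $\mathbb{R}$ for part $(a)$.

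The chief obstacle is precisely this signed nonvanishing; producing orientations is not enough, one must control the cancellation. Since any two orientations with the same in-degree sequence differ by a symmetric difference that decomposes into a disjoint union of directed cycles, a natural strategy is to construct a sign-reversing involution whose only fixed points are the Vandermonde-type orientations (or a subfamily thereof whose signs agree). Implementing such an involution for an arbitrary linear hypergraph is delicate, and I would exploit the freedom in choosing the identifier spanning trees to simplify the cycle structure of $G_j(\HH)$ --- for instance, by picking trees that minimise short cycles through identifier edges, or that concentrate the identifier edges on a Hamiltonian-like backbone. For part $(b)$, Fermat's little theorem gives $t^{p-1}\equiv 1\pmod p$ for $t\not\equiv 0$, which collapses much of the algebra of the factors $R_{i,k}$ modulo $p$ and should make the signed count noticeably more transparent than in the real case. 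A plausible overall route is induction on $|V(\HH)|$: peel off a vertex of smallest degree in the derived hypergraph, modify the identifier spanning tree locally, and use the remaining freedom to preserve nonvanishing across the reduction. Proving that a suitable local choice can always be made, uniformly over all linear hypergraphs, is, in my view, the heart of the conjecture.
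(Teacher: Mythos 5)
The statement you were asked to prove is stated in the paper as \emph{Conjecture}~\ref{polisejtes}, not as a theorem, and the paper does not prove it. What the paper actually establishes (Theorems~\ref{multiedges} and~\ref{mainP2}) is precisely the necessary condition you single out: that a Vandermonde-completable orientation of the identifier edges exists for both $G_1(\HH)$ and $G_2(\HH)$. Corollary~\ref{cor:completablemonomial} upgrades this slightly in the prime case to a nonzero coefficient of a Vandermonde-completable monomial in $\prod_{i<k} R_{i,k}(\textbf{x})$, using the fact that orientations realizing a fixed in-degree sequence on $G_1$'s multi-edge trees all have the same sign and that their count is prime to $n$. But that is still one step short of part~$(b)$ of the conjecture: the coefficient of an $(n-1)$-bounded maximal monomial in $P_1(\textbf{x})$ is a \emph{signed} sum, over many Vandermonde permutation choices $(\sigma_i)$, of such coefficients in $\prod R_{i,k}$, and nothing in the paper controls that outer cancellation. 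For part~$(a)$ the paper explicitly leaves even the analogue of Corollary~\ref{cor:completablemonomial} open.

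So your proposal is an honest sketch of strategy rather than a proof, and you correctly diagnose the genuine gap: existence of orientations is established, but the signed nonvanishing is not, and that is exactly where the authors also stop. Two small calibrations: (i) Fermat's little theorem enters the paper only to show that $P_1$ vanishes on improper colorings; it is not used to simplify the coefficient computation, and in particular it does not by itself ``collapse'' the algebra of the $R_{i,k}$ when summing signed orientations. (ii) The cancellations you need to tame are of two kinds: between different orientations of the identifier edges that realize the same monomial of $\prod R_{i,k}$ (which the paper handles for $G_1$ but not $G_2$), and between the different Vandermonde permutations $(\sigma_i)$ contributing to the same monomial of $P_1$ or $P_2$ (which neither you nor the paper addresses). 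Any sign-reversing involution or inductive scheme would have to deal with both layers simultaneously. Your idea of exploiting the freedom in the choice of identifier spanning trees is exactly the lever the authors also point to, but neither you nor they exhibit a choice for which the final signed count is provably nonzero in general.
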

   
Observe that this conjecture would imply the Conjecture \ref{mainc} of Erdős, Faber and Lovász.
  Indeed, if  one evaluates the polynomial $P_1$ or $P_2$ on the Cartesian product  $\{0,1,\ldots, n-1\}^n$,
  it will vanish except when the values of the variables correspond to a proper coloring, although one has to suppose that $n$ is a prime in the case of $P_1$. To see this, note that $Q_i(\textbf{x})=0$  holds if  and only if there exist two vertices in some hyperedge $F_i$ with the same color. Moreover, when $n$ is a prime, Fermat's little theorem implies that $R_{i,k}(\textbf{x})=0$ for some $i,k$ if and only if not all identified vertices received the same color. Finally, if $Q_i(\textbf{x})\neq 0$ for every $i$, then we have $\Phi_{i,k}(\textbf{x})=0$ for some $i,k$  if and only if not all identified vertices were  colored with the same color. Hence, $ P_j(\textbf{x})\neq 0$  ($j\in \{1, 2\}$) holds if and only if both properties (P1) and (P2) are satisfied. 
  



\subsection{Searching for a nonzero coefficient and the connection to orientations}

Alon and Tarsi made a connection between a certain coloring problem and the number of Eulerian orientations via the Non-vanishing Lemma \cite{AlonTarsi}. In our case, the situation is somewhat similar.

Define the \emph{sign} of an orientation of the graph $G_a(\HH)$ ($a\in \{1,2\}$) to be $(-1)^t$, where $t$ is the number of edges which point from $x_{i,j}$ to $x_{i,j'}$ with $j<j'$ or from $x_{i,j}$ to $x_{k,l}$ with $i<k$. Now, using the correspondence between $G_a(\HH)$ and $P_a(\textbf{x})$, it is not hard to see that the coefficient of any maximum-degree monomial $\prod_{i,j} x_{i,j}^{\alpha_{i,j}}$ in $P_a(\textbf{x})$ is the sum of the signs of those orientations of $G_a(\HH)$ in which the in-degree of every vertex $v_{i,j}$ is precisely $\alpha_{i,j}$.


To make the Non-vanishing lemma applicable, we clearly need that the exponent of each variable is less than $n$. We will also rely on the following basic fact.

\begin{fact}\label{vander} $(-1)^{\binom{n}{2}}\cdot  Q_i(\textbf{x})=\prod_{j<j'}(x_{i,j'}-x_{i,j})$ equals the Vandermonde determinant 
$$\begin{bmatrix}
1 & x_{i,1} & x_{i,1}^2 & \dots & x_{i,1}^{n-1}\\
1 & x_{i,2} & x_{i,2}^2 & \dots & x_{i,2}^{n-1}\\
1 & x_{i,3} & x_{i,3}^2 & \dots & x_{i,3}^{n-1}\\
\vdots & \vdots & \vdots & \ddots &\vdots \\
1 & x_{i,n} & x_{i,n}^2 & \dots & x_{i,n}^{n-1}
\end{bmatrix}.$$
\end{fact}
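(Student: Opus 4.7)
The fact has two distinct assertions, and I would address them in sequence.

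The first equality $(-1)^{\binom{n}{2}}\cdot Q_i(\textbf{x})=\prod_{j<j'}(x_{i,j'}-x_{i,j})$ is immediate from the defining formula $Q_i(\textbf{x})=\prod_{j<j'}(x_{i,j}-x_{i,j'})$: each of the $\binom{n}{2}$ pairs $(j,j')$ with $j<j'$ contributes a single sign flip when we reverse the order of subtraction, so the overall product picks up a factor of $(-1)^{\binom{n}{2}}$. This step is purely formal.

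For the determinant identity, the plan is the textbook route. Let $D(x_{i,1},\ldots,x_{i,n})$ denote the displayed determinant, viewed as a polynomial in the variables $x_{i,1},\ldots,x_{i,n}$ over the ring of integers. First, the Leibniz expansion shows that $D$ has total degree at most $0+1+\cdots+(n-1)=\binom{n}{2}$, and the diagonal term contributes the monomial $x_{i,1}^{0}x_{i,2}^{1}\cdots x_{i,n}^{n-1}$ with coefficient $+1$. Second, substituting $x_{i,j}=x_{i,j'}$ for any pair $j\neq j'$ makes two rows of the matrix coincide, so $D$ vanishes on the hyperplane $\{x_{i,j}=x_{i,j'}\}$. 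By the factor theorem applied in the polynomial ring $\mathbb{Z}[x_{i,1},\ldots,x_{i,n}]$, the linear form $(x_{i,j'}-x_{i,j})$ divides $D$, and since these $\binom{n}{2}$ linear forms are pairwise coprime irreducibles in this UFD, their product $\prod_{j<j'}(x_{i,j'}-x_{i,j})$ divides $D$.

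Comparing total degrees, $D$ must equal a scalar multiple of the Vandermonde product. To pin down the scalar, I would read off the coefficient of $x_{i,1}^{0}x_{i,2}^{1}\cdots x_{i,n}^{n-1}$ on both sides: on the determinant side it is $+1$ (from the diagonal, as noted), and on the product side it is also $+1$ (obtained by selecting the variable $x_{i,j'}$ from each factor $(x_{i,j'}-x_{i,j})$, which gives exactly the same monomial once). Hence $D=\prod_{j<j'}(x_{i,j'}-x_{i,j})$, which combined with the first equality yields the fact. There is no serious obstacle here; the only thing one must be careful about is that the degree-matching and coefficient-matching be carried out with the same reference monomial, so that the scalar multiple is unambiguously identified as $1$.
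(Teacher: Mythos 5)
Your proof is correct. The paper states this as a \emph{Fact} without supplying a proof, treating the Vandermonde determinant evaluation as classical; your argument (sign-flip for the first equality, then divisibility via the factor theorem in $\mathbb{Z}[x_{i,1},\dots,x_{i,n}]$, degree comparison, and matching the coefficient of $x_{i,1}^{0}x_{i,2}^{1}\cdots x_{i,n}^{n-1}$) is the standard textbook derivation and fills this in completely and accurately.
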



Using Fact \ref{vander}, the product $\prod_i Q_i(\textbf{x})\cdot \prod_{i,j} x_{i,j}^{\beta_{i,j}}$ is a linear combination of monomials of the form $\prod_{i,j} x_{i,j}^{\beta_{i,j}+\sigma_i(j)}$, where $\{\sigma_i(j): 1\leq j\leq n\}=\{0,1,\dots,n-1\}$ for every $i$. Thus, to determine the coefficients of the $(n-1)$-degree bounded monomials with maximal degree in $P_1(\textbf{x})$, it suffices to consider those monomials $\prod_{i,j} x_{i,j}^{\beta_{i,j}}$ in $\prod_{i<k} R_{i,k}(\textbf{x})$ which have maximal degree and for which there exist functions $\sigma_i$ as above such that $\beta_{i,j}+\sigma_i(j)\leq n-1$ for every $i,j$. Call such monomials \emph{Vandermonde-completable}. Similarly, to compute the coefficients of the $(n-1)$-bounded degree polynomials in $P_2(\textbf{x})$ with maximal degree, we only need to consider Vandermonde-completable monomials in $\prod_{i<k} \Phi_{i,k}(\textbf{x})$.

We call an orientation of the identifier edge set (in $G_1(\HH)$ or $G_2(\HH)$) {\it Vandermonde-completable} if  one can orient the edges in the base cliques such that each clique spans a transitive tournament and every in-degree in the whole graph is bounded by $n-1$ from above. Under the correspondence between the orientations of the identifier edges in $G_1(\HH)$ and the monomials in $\prod_{i<k} R_{i,k}(\textbf{x})$, Vandermonde-completable orientations correspond to Vandermonde-completable monomials and vice versa. The same holds in the case of $G_2(\HH)$ and $\prod_{i<k} \Phi_{i,k}(\textbf{x})$.

\subsection{Main results}

Our main result states that Vandermonde-completable orientations exist.


\begin{theorem}\label{multiedges}
Let $\HH$ be an 
$n$-uniform linear hypergraph with $n$ hyperedges.  Then, for any choice of the identifier spanning trees, there is a Vandermonde-completable orientation of the identifier edges in $G_1(\HH)$.
\end{theorem}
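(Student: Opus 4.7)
The plan is to reformulate the statement as a degree-constrained orientation problem, prove a per-tree flexibility lemma, and reduce to a combinatorial slot-assignment problem that can be tackled via Hall's theorem using the linearity of $\HH$.

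First, I would reformulate the existence of a Vandermonde-completable orientation as follows. Orienting any base clique $K_n^{(i)}$ as a transitive tournament produces in-clique in-degrees $\{0,1,\ldots,n-1\}$ bijectively; so the total in-degree of $v_{i,j}$ is at most $n-1$ iff the assigned in-clique rank $r-1$ satisfies $r-1 \le n-1-\alpha_{i,j}$, where $\alpha_{i,j}$ is the in-degree from identifier edges. By Hall's marriage theorem, a bijection between the vertices and the ranks $\{1,\ldots,n\}$ with this property exists iff the multiset $(\alpha_{i,j})_{j=1}^n$ is majorized by the staircase $(n-1,n-2,\ldots,0)$. Therefore the problem reduces to finding an orientation of the identifier multigraph $G^*=\bigsqcup_v T_v$ (each edge having multiplicity $n-1$) that realizes such $\alpha$-sequences in every base clique.

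I would then establish a per-tree flexibility lemma: for any tree $T$ and any integer target $(\alpha_u)_{u\in V(T)}$ with $0 \le \alpha_u \le n-1$ and $\sum_u \alpha_u=(n-1)|E(T)|$, there is an orientation of the $(n-1)$-fold edges of $T$ achieving $(\alpha_u)$ as the in-degree sequence. The proof goes by induction on $|V(T)|$: the total slack $\sum_u(n-1-\alpha_u)$ equals $n-1$, so no two vertices can have slacks summing to $n$ or more, hence there is a leaf $\ell$ with parent $p$ satisfying $\alpha_\ell+\alpha_p \ge n-1$; orient the leaf edge by sending $\alpha_\ell$ copies toward $\ell$ and $n-1-\alpha_\ell$ toward $p$, then recurse on the smaller tree with updated $\alpha_p \mapsto \alpha_p-(n-1-\alpha_\ell)\ge 0$.

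With the flexibility lemma in hand, the task reduces to constructing targets $\alpha^v_i\in\{0,\ldots,n-1\}$ (one for each shared $v\in V(\HH)$ and each clique $i \ni v$) with $\sum_{i \ni v}\alpha^v_i=(n-1)(d(v)-1)$ and the per-clique majorization. I would parameterize this by choosing, for each clique $i$, a bijection $\sigma_i$ from its $s_i$ shared vertices to the slot set $\{0,1,\ldots,s_i-1\}$; one can then pick $\alpha^v_i \in [0,n-1-\sigma_i(v)]$ summing to the required total, which is achievable iff $\sum_{i\ni v}\sigma_i(v)\le n-1$ for every shared $v$. The main obstacle is proving the existence of such slot bijections. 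I expect this to follow from a Hall/max-flow argument exploiting the linearity of $\HH$ via the key double-counting bound $s_i \le n-d(v)+1$ (valid for every shared $v \in F_i$, since each of the $s_i-1$ other shared vertices of $F_i$ requires a distinct certifying hyperedge not containing $v$), combined with the standard linearity inequality $\sum_v \binom{d(v)}{2}\le\binom{n}{2}$. Together these should yield the needed global bound $\sum_i\binom{s_i}{2}\le(n-1)|S|$, whence the matching exists and the per-tree realization produces the Vandermonde-completable orientation.
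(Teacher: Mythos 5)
Your first two steps closely parallel the paper's own argument: the reformulation of Vandermonde-completability as a majorization constraint and the per-tree orientation lemma are essentially the paper's Claim~\ref{claim: orient} (the paper's version allows negative intermediate targets and only derives $\alpha_w\geq 0$ at the chosen leaf, whereas your variant establishes nonnegativity globally and picks a leaf with $\alpha_\ell+\alpha_p\ge n-1$; both inductions are correct). Where your proposal diverges is the crucial third step: you do not actually exhibit the in-degree targets $\alpha^v_i$, only sketch a hoped-for Hall/max-flow argument and two inequalities that you "expect" will suffice.

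That third step is a genuine gap, and the inequalities you cite do not obviously close it. The bounds $s_i\le n-d(v)+1$ and $\sum_v\binom{d(v)}{2}\le\binom{n}{2}$ give you, at best, the aggregate necessary condition $\sum_i\binom{s_i}{2}\le(n-1)\lvert S\rvert$; this is far from sufficient for the simultaneous slot-assignment you need (one bijective labeling per clique, one budget constraint per shared vertex), and the naive per-vertex bound $\sum_{i\ni v}\sigma_i(v)\le d(v)(n-d(v))$ that your estimates directly yield exceeds $n-1$ already when $d(v)=2$ and $n\ge 4$. The paper sidesteps the matching problem entirely with an explicit construction: it fixes the cyclic quantity $s(a,b)\in\{1,\dots,n-1\}$, and for a vertex $v$ lying in cliques $F_{i_1},\dots,F_{i_k}$ (indices increasing, with $i_0:=i_k$) sets the target in-degree of the copy in $F_{i_j}$ to be at most $s(i_{j-1},i_j)$. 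These bounds sum to $(k-1)n\ge(k-1)(n-1)$ (so the tree lemma applies), and by linearity of $\HH$ the ``predecessor'' indices seen from any fixed clique $i$ are pairwise distinct, so the bounds in clique $i$ are distinct elements of $\{1,\dots,n-1\}$ --- which immediately gives the staircase majorization. Incidentally, this construction also shows that your slot bijections $\sigma_i$ do exist (take $\sigma_i(v)=n-1-s(\text{pred},i)$ and compress to $\{0,\dots,s_i-1\}$, noting $\sum_{i\ni v}(n-1-s(\text{pred}_i,i))=n-d(v)$), but that existence is precisely what your sketch leaves unproved; the explicit cyclic assignment is the missing idea.
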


When $n$ is a prime, the number of Vandermonde-completable orientations corresponding to a given Vandermonde-completable monomial in $\prod_{i<k} R_{i,k}(\textbf{x})$ is not divisible by $n$. Moreover these orientations all have the same sign, so we obtain the following corollary.

\begin{cor} \label{cor:completablemonomial}
Let $n$ be a prime and let $\HH$ be an $n$-uniform linear hypergraph with $n$ hyperedges. Then, for any choice of the identifier spanning trees, there is a Vandermonde-completable monomial (with non-zero coefficient) in $\prod_{i<k} R_{i,k}(\textbf{x})$.
\end{cor}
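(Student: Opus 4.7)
The plan is to show that the Vandermonde-completable orientation of the identifier edges of $G_1(\HH)$ supplied by Theorem \ref{multiedges} corresponds to a monomial in $\prod_{i<k} R_{i,k}(\mathbf{x})$ whose coefficient, reduced modulo $n$, is nonzero. Starting from such an orientation $O$, I read off the in-degree $\beta_{i,j}$ at each vertex $v_{i,j}$ coming from the identifier edges and set $m_O = \prod_{i,j} x_{i,j}^{\beta_{i,j}}$. Because $O$ can, by the definition of Vandermonde-completability, be extended by a transitive tournament on each base clique so that all total in-degrees are at most $n-1$, the monomial $m_O$ is a Vandermonde-completable monomial of $\prod_{i<k} R_{i,k}$ of maximum degree. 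It therefore suffices to verify that the coefficient of $m_O$ in $\prod_{i<k} R_{i,k}$ does not vanish.

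Every maximum-degree monomial of $\prod_{i<k} R_{i,k}$ is obtained by selecting, from each non-trivial factor $R_{i,k}(\mathbf{x})=(x_{i,j}-x_{k,l})^{n-1}-1$, a term $x_{i,j}^{s_{i,k}}x_{k,l}^{n-1-s_{i,k}}$ with coefficient $\binom{n-1}{s_{i,k}}(-1)^{n-1-s_{i,k}}$. The key observation is that the tuple $(s_{i,k})$ producing the monomial $m_O$ is uniquely determined. Indeed, the identifier spanning trees $T_v$ for different $v\in V(\HH)$ live on pairwise disjoint vertex sets of $G_1(\HH)$, so the recovery can be performed tree by tree; within each $T_v$, peeling off leaves iteratively yields the $s$-value of the unique tree edge at each leaf directly from its residual exponent, after which that leaf is deleted and the procedure is repeated on the smaller tree. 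Consequently, the coefficient of $m_O$ in $\prod_{i<k} R_{i,k}$ equals the single product
$$\prod_{(i,k)\text{ non-trivial}}\binom{n-1}{s_{i,k}}(-1)^{n-1-s_{i,k}}.$$

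Finally, since $n$ is prime, the congruence $\binom{n-1}{s}\equiv(-1)^s\pmod{n}$ (immediate from $\binom{n-1}{s}=\frac{(n-1)(n-2)\cdots(n-s)}{s!}$, or from Lucas' theorem) reduces each factor above to $(-1)^{n-1}\pmod{n}$, making the total product $\pm 1\pmod{n}$, and so in particular a nonzero integer. The main conceptual hurdle I expect is the uniqueness step: if the identifier edges were organised into a subgraph with cycles rather than into a spanning tree on each set of copies of a vertex $v$, then several different $(s_{i,k})$-tuples could collapse onto the same monomial $m_O$, and their signed contributions could in principle cancel modulo $n$. The tree hypothesis built into Definition \ref{aux1} is precisely what rules this out and leaves the coefficient of $m_O$ as a single nonzero product, yielding the corollary.
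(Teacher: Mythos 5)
Your proof is correct and follows essentially the same route as the paper's: start from the Vandermonde-completable orientation given by Theorem~\ref{multiedges}, use the tree structure (leaf-peeling on each vertex-disjoint identifier spanning tree) to see that the choice of terms is uniquely determined by the monomial, and use primality of $n$ via $\binom{n-1}{s}\equiv(-1)^s\pmod n$ to see that the resulting product of binomial coefficients is nonzero mod $n$. The paper condenses this into the remark that the number of orientations realizing a given Vandermonde-completable monomial is coprime to $n$ and all have the same sign, which is precisely what your explicit computation establishes.
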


We also prove the analogue of Theorem \ref{multiedges} in the case of $G_2(\HH)$, although in this case we make a specific choice for the identifier spanning trees.

\begin{theorem}\label{mainP2} Let $\HH$ be an 
$n$-uniform linear hypergraph with $n$ hyperedges.  Then one can choose the identifier spanning trees in a way that there is a Vandermonde-completable orientation of the identifier edges in $G_2(\HH)$.
\end{theorem}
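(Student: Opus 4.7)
My plan is to deduce Theorem~\ref{mainP2} from Theorem~\ref{multiedges} by exploiting the fact that, in $G_2(\HH)$, each star may be oriented spoke by spoke, whereas in $G_1(\HH)$ the star is collapsed to a single multi-edge: the $p_e$ ``in-degree units'' that $G_1$ concentrates at the tree endpoint $v_{k,l}$ can in $G_2$ be spread, one apiece, over a $p_e$-subset $S_e \subseteq \{v_{k,t}: t\neq l\}$. I would first fix the identifier spanning trees to be \emph{paths}: for each $v \in V(\HH)$ appearing in edges $F_{i_1}, \dots, F_{i_{d(v)}}$ with $i_1 < \cdots < i_{d(v)}$, take the path $v_{i_1,\cdot}-v_{i_2,\cdot}-\cdots-v_{i_{d(v)},\cdot}$. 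This is convenient because each copy of $v$ is then incident to at most two tree edges, so in every clique $k$ each position $v_{k,t}$ is the tree endpoint of at most one incoming tree edge.

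Apply Theorem~\ref{multiedges} to $G_1(\HH)$ with these path trees to obtain a Vandermonde-completable orientation, encoded by integers $(p_e)$, one for each tree edge $e$. In $G_2(\HH)$ I set $|S_e|=p_e$ so that the center $v_{i,j}$ of each star receives the same identifier in-degree $n-1-p_e$ as in $G_1$. I then choose the sets $S_e$ greedily inside each clique $k$: process the incoming tree edges in a suitable order and for each assign $S_e$ to consist of the $p_e$ vertices in $\{v_{k,t}: t\neq l(e)\}$ currently carrying the smallest identifier in-degree. Intuitively, this redistributes the high concentrations at tree endpoints in $G_1$ onto under-loaded vertices in $G_2$, yielding a more balanced profile in each clique.

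The main analytic step, and the principal obstacle, is to show that the resulting profile is Vandermonde-completable --- i.e., that its sorted-decreasing sequence in every clique is dominated by $(n-1, n-2, \dots, 0)$. The natural argument is majorization: moving mass from richer entries to poorer ones in a multiset can only shrink the top of its sorted-decreasing sequence, so starting from a profile that is Vandermonde-dominated preserves that property. The difficulty is the structural restriction $S_e \subseteq \{t: t\neq l(e)\}$, which forbids distributing onto $v_{k,l(e)}$ itself; when $v_{k,l(e)}$ happens to be an under-loaded vertex, naive greediness can fail to realize the majorization. I expect this is handled by casting the assignment of the $S_e$'s inside each clique $k$ as a bipartite capacity-bounded matching problem (supplies $p_e$ per incoming tree edge, capacities $n-r_{k,t}-B_{k,t}$ for a chosen ranking $r_{k,t}$, where $B_{k,t}$ is the contribution coming from outgoing tree edges at $v_{k,t}$) and verifying Hall's condition; the path-tree choice, which forces each position to be the endpoint of at most one incoming tree edge, keeps the defect inequalities manageable.
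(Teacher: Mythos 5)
Your proposed route---deducing Theorem~\ref{mainP2} from Theorem~\ref{multiedges} by ``smearing'' the $p_e$ in-degree units of each $G_1$-multi-edge over a $p_e$-subset $S_e$ of the corresponding star's leaves in $G_2$---is genuinely different from the paper's argument. The paper does not pass through $G_1$ at all: it fixes path-like spanning trees (the same choice you make) and then proves the $G_2$ statement by a direct induction on $n$, using the fact that each hyperedge $F_i$ in an $(n+1)$-uniform linear hypergraph with $n+1$ edges contains a private vertex. Removing one private vertex from each of $F_1,\dots,F_n$ yields an $n$-uniform instance $\HH'$ whose path-like $G_2(\HH')$ embeds in $G_2(\HH)$; after invoking the induction hypothesis on $\HH'$, the new identifier edges (those incident to $F_0$ or to the deleted private vertices) are oriented by an explicit rule, and Vandermonde-completability is checked clique by clique.

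The step you yourself flag as ``the principal obstacle'' is a real gap, not a routine verification. You need, in each clique $k$, a ranking $\sigma$ and an assignment of the $p_e$ units to leaves with: (i) each incoming star $e$ sends $p_e$ distinct units into $\{t : t \neq l(e)\}$, at most one per leaf; (ii) each position $t$ finishes with identifier in-degree at most $\sigma(t)$ after also adding the fixed charge $n-1-p_{e'}$ from its outgoing star (if any). Because the star edges have unit capacity, the relevant feasibility condition is not the two-line deficiency check of an uncapacitated transshipment: for every subset $T$ of positions one must bound $\sum_e \max\bigl(0,\, p_e - |T^c\setminus\{l(e)\}|\bigr)$ by the residual capacity of $T$, and it is exactly the sets $T$ containing the under-loaded forbidden positions $l(e)$ (and excluding heavily charged source positions) that are dangerous. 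You offer majorization as the intuition, but the forbidden-leaf constraint is precisely what breaks the ``moving mass from rich to poor'' picture, and the greedy rule you describe does not obviously respect it. Nothing in the proposal shows that the in-degree profile delivered by Theorem~\ref{multiedges} (which is controlled via the cyclic gaps $s(i_{j-1},i_j)$ and is tailored to the multi-edge concentration at a single vertex) produces capacities $c_t = \sigma(t) - q_{\mathrm{out}}(t)$ for which these inequalities hold. Until that computation is done, the proposal is a plausible program, not a proof; the paper's inductive construction sidesteps this entirely by never reducing to $G_1$.
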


Unfortunately, we cannot prove an analogue of Corollary \ref{cor:completablemonomial} because we cannot compute the sum of the signs of the orientations that yield the same monomial in $\prod_{i<k} \Phi_{i,n}(\textbf{x})$. We leave it as a conjecture.

\begin{conj}
Let $\HH$ be an $n$-uniform linear hypergraph with $n$ hyperedges. Then one can choose the identifier spanning trees in a way that there is a Vandermonde-completable monomial (with non-zero coefficient) in $\prod_{i<k} \Phi_{i,k}(\textbf{x})$.
\end{conj}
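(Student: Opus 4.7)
The plan is to start from the Vandermonde-completable orientation furnished by Theorem \ref{mainP2} and, via a sign-reversing involution, to show that the corresponding monomial has nonzero coefficient in $\prod_{i<k}\Phi_{i,k}(\textbf{x})$. Fix any choice of identifier spanning trees for which Theorem \ref{mainP2} produces a Vandermonde-completable orientation, and let $M_0=\prod_{i,j}x_{i,j}^{\beta_{i,j}}$ be the monomial arising from that orientation. By the correspondence set up in Section~2.2, the coefficient of $M_0$ in $\prod_{i<k}\Phi_{i,k}$ equals
\[
\sum_{\omega}(-1)^{\mathrm{rev}(\omega)},
\]
where $\omega$ ranges over orientations of the identifier edges of $G_2(\HH)$ realizing the in-degree sequence $(\beta_{i,j})$, and $\mathrm{rev}(\omega)$ counts identifier edges oriented from a star's leaf back to its centre.

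The core observation is that reversing every edge of a directed cycle of $\omega$ (within the identifier-edge subgraph) preserves every in-degree, so it produces another orientation contributing to the same monomial; moreover, such a cycle-reversal flips the sign $(-1)^{\mathrm{rev}}$ precisely when the cycle has odd length. I would therefore impose a total order on the directed cycles of the identifier-edge graph and define $\iota$ to reverse the minimum odd directed cycle contained in $\omega$. This yields a sign-reversing involution on orientations containing at least one odd directed cycle. The remaining ``rigid'' orientations --- those with no odd directed cycle --- would then need to be enumerated and shown to have nonzero net signed count, ideally by exhibiting a canonical one (such as the orientation produced in Theorem \ref{mainP2}, if the identifier spanning trees can be chosen so that this orientation is acyclic with respect to a consistent cross-clique order).

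An alternative, algebraic route is to expand each factor via the Lagrange interpolation identity
\[
\Phi_{i,k}(\textbf{x}) \;=\; L^{(k)}_{l(i,k)}\!\bigl(x_{i,j(i,k)}\bigr)\cdot \prod_{m\neq l(i,k)}\bigl(x_{k,l(i,k)}-x_{k,m}\bigr),
\]
where $L^{(k)}_{l}$ is the Lagrange basis polynomial at the node $x_{k,l}$ with respect to $\{x_{k,1},\dots,x_{k,n}\}$, and then to extract the coefficient of $M_0$ as a determinant of a matrix combinatorially indexed by the identifier spanning trees and the base cliques. This reformulation trades the signed-sum problem for a non-vanishing statement about such a determinant, which might be more amenable to analysis via the linearity of $\HH$.

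The main obstacle is precisely the one flagged by the authors: since a single vertex $v_{k,m}$ may receive identifier-edge in-degree from many distinct stars, the signed count over orientations with a prescribed in-degree sequence does not factorize per star, and local moves in one region of the identifier-edge graph can be compensated by moves elsewhere. Constructing an involution with a provably nonempty fixed-point set of known parity --- or equivalently, showing that the associated determinant is nonzero --- requires a global structural argument about the identifier-edge graph of $G_2(\HH)$ that does not follow from Theorem \ref{mainP2}, and this is the heart of the conjecture.
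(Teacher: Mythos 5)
This statement is not proved in the paper: the authors explicitly leave it as a conjecture, writing that they ``cannot compute the sum of the signs of the orientations that yield the same monomial in $\prod_{i<k}\Phi_{i,k}(\textbf{x})$.'' Your proposal likewise does not close the gap --- you end by conceding that establishing non-cancellation ``requires a global structural argument\dots\ that does not follow from Theorem~\ref{mainP2}, and this is the heart of the conjecture.'' So you and the paper are in the same position; what you have written is a survey of candidate approaches, not a proof.

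A few concrete observations on the routes you sketch. The reduction is correct: the coefficient of a monomial $\prod x_{i,j}^{\beta_{i,j}}$ in $\prod_{i<k}\Phi_{i,k}(\textbf{x})$ is a signed count of orientations of the identifier edges with that in-degree sequence, and reversing a directed cycle preserves in-degrees while flipping the sign exactly when the cycle is odd --- this is the right analogue of the Alon--Tarsi mechanism. (Your sign convention is the mirror image of the paper's, counting low-to-high rather than high-to-low arcs, but that only changes the global sign.) However, your proposed involution $\iota$, ``reverse the minimum odd directed cycle,'' is not obviously an involution: after reversing the chosen cycle $C$, the cycle structure of $\iota(\omega)$ changes, and the minimum odd directed cycle of $\iota(\omega)$ need not be the reversal of $C$. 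One would need a canonical choice that is invariant under its own reversal --- for instance working with the symmetric difference against a fixed reference orientation and a fixed edge order --- and even then the fixed points are characterized by a bipartiteness condition on that symmetric difference, which is structurally quite different from ``contains no odd directed cycle'' and still leaves the non-vanishing of the residual signed sum open. The Lagrange rewriting of $\Phi_{i,k}$ is a correct identity, but since the extra factors $\prod_{m\neq l}(x_{k,l}-x_{k,m})$ reuse variables already present in $Q_k$ and since distinct stars share leaves inside the same base clique, the resulting expression does not decouple across cliques and reproduces the same non-local cancellation problem. In short, the core missing idea --- a global argument pinning down the signed count over fixed points --- \emph{is} the conjecture, and neither your proposal nor the paper supplies it.
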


\section{The proofs of the main results}

\subsection{The case of auxiliary graph $G_1(\HH)$}

\begin{proof}[Proof of Theorem \ref{multiedges}]

We will use the following claim.

\begin{claim} \label{claim: orient}
    Let $T$ be a tree with $k$ vertices. Write $T'$ for the multigraph on the same vertex set, obtained by taking $n-1$ copies of each edge of $T$. Suppose that for every $v\in V(T)$ there is an integer $\alpha_v$ such that $\alpha_v\leq n-1$ and $\sum_{v\in V(T)} \alpha_v=(k-1)(n-1)$. Then there exists an orientation of the edges of $T'$ in which every $v$ has in-degree $\alpha_v$.
\end{claim}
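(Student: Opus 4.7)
The plan is to prove this by induction on $k$, the number of vertices of $T$. The base case $k=1$ is trivial, as there are no edges and necessarily $\alpha_v=0$. For the inductive step, the idea is to peel off a leaf $v$ of $T$ with neighbour $u$. Since all $n-1$ edges of $T'$ incident to $v$ are copies of the edge $uv$, giving $v$ in-degree exactly $\alpha_v$ forces us to orient $\alpha_v$ of these copies towards $v$ and the remaining $n-1-\alpha_v$ towards $u$. After deleting $v$, what remains is the same problem on $T-v$ with the weight at $u$ replaced by $\alpha_u':=\alpha_u-(n-1-\alpha_v)=\alpha_u+\alpha_v-(n-1)$ and all other weights unchanged.

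For the induction hypothesis to apply to $(T-v,\alpha')$, one checks that the sum condition is preserved (a total of $n-1$ in-degree units has been stripped along with $v$), and that $\alpha_u'\leq n-1$ is automatic since $\alpha_u'\leq \alpha_u$. The only genuine constraint is $\alpha_u'\geq 0$, i.e.\ $\alpha_u+\alpha_v\geq n-1$, so the real task is to choose the peeled leaf carefully.

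The main step, and the only place where any thought is needed, is to show that some leaf of $T$ always admits such a neighbour. I would switch to the \emph{deficits} $\delta_v:=(n-1)-\alpha_v\geq 0$, for which the hypothesis reads $\sum_{v\in V(T)}\delta_v=k(n-1)-(k-1)(n-1)=n-1$, and the desired inequality becomes $\delta_u+\delta_v\leq n-1$. Suppose for contradiction that every leaf $v$ of $T$ together with its unique neighbour $u$ satisfies $\delta_u+\delta_v>n-1$. Picking any such pair, the remaining vertices still contribute non-negative deficit, forcing $\sum_{w\in V(T)}\delta_w>n-1$ and contradicting the identity above. Hence a suitable leaf exists, the orientation of the $n-1$ parallel edges incident to it is forced as described, and the induction closes.
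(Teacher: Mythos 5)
Your proof is correct and takes the same leaf-peeling induction as the paper. The careful choice of leaf is unnecessary, though: for any leaf $v$ with neighbour $u$ one already has $\delta_u+\delta_v\le\sum_w\delta_w=n-1$ (exactly the inequality you derive), so the paper simply peels off an arbitrary leaf and observes that the hypotheses $\alpha_v\le n-1$ and $\sum_v\alpha_v=(k-1)(n-1)$ automatically keep the updated weights nonnegative.
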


\begin{proof}
    We use induction on $k$. The statement is clear for $k=0,1$. Assume that $k>1$. Let $w$ be a leaf of $T$. Let $uw$ be the unique edge of $T$ containing $w$. Direct $\alpha_w$ of the $(n-1)$ edges of $T'$ corresponding to $uw$ towards $w$ and direct the rest towards $u$. (Note that the conditions $\alpha_v\leq n-1$ and $\sum_{v\in V(T)} \alpha_v=(k-1)(n-1)$ ensure that $\alpha_w\geq 0$.) Let $S$ be the tree obtained from $T$ by deleting $w$. For every $v\in V(S)\setminus \{u\}$, let $\beta_v=\alpha_v$, while let $\beta_u=\alpha_u-(n-1-\alpha_w)$. Clearly, $\sum_{v\in V(S)} \beta_v=(k-2)(n-1)$. Hence, by the induction hypothesis, we can orient the edges of $S'$ (which is the multigraph obtained by taking $n-1$ copies of each edge of $S$) in a way that the in-degree of every vertex $v\in V(S)$ is $\beta_v$. Together with the orientation of the edges corresponding to $uw$, we get a suitable orientation of $T'$, completing the induction step.
\end{proof}
    
    For any $1\leq a,b\leq n$ with $a\neq b$, let $s(a,b)=
    \begin{cases}
        a-b \mbox{ if } a>b \\
        n+a-b \mbox{ if } a<b
    \end{cases}$
    
    Let $v\in V(\HH)$. Let the edges of $\HH$ which contain $v$ be $F_{i_1},\dots,F_{i_k}$, where $i_1<\dots <i_k$. Then each base clique $K_n^{(i_j)}$ ($1\leq j\leq k$) contains a vertex labelled by $v$, call it $w_j$. Let $T$ be the identifier spanning tree on the vertex set $\{w_1,\dots,w_k\}$. Define $i_{0}$ to be $i_{k}$. Note that $s(i_{j-1},i_j)\leq n-1$ for every $1\leq j\leq k$ and $\sum_{1\leq j\leq k} s(i_{j-1},i_j)=(k-1)n\geq (k-1)(n-1)$. Thus, by Claim \ref{claim: orient}, we can orient the edges of $G_1(\HH)$ corresponding to the spanning tree $T$ in a way that every vertex $w_j$ gets in-degree at most $s(i_{j-1},i_j)$. Hence, the in-degree at every $w_j$ is at most $\max(s(i_q,i_j): q\neq j)$.
    
    Performing this for every $v\in V(\HH)$, we obtain an orientation of the identifier edges in $G_1(\HH)$. We claim that it is Vandermonde-completable. Indeed, for every $1\leq i\leq n$, using the fact that $\{s(q,i):q\neq i\}=\{1,\dots,n-1\}$ and that $\HH$ is linear, the number of vertices of $K_n^{(i)}$ with in-degree at least $n-j$ is at most $j$ for every $1\leq j\leq n$.
\end{proof}

\subsection{The case of auxiliary graph $G_2(\HH)$}


\begin{proof} [Proof of Theorem \ref{mainP2}] For any $x\in V(\HH)$, let the identifier spanning tree corresponding to $x$ be the path $v_{i_1,j_1}v_{i_2,j_2}\dots v_{i_t,j_t}$, where $i_1<i_2<\dots<i_t$ (so the edges of $\HH$ containing $x$ are $F_{i_1},\dots,F_{i_t}$). We call the auxiliary graphs $G_2(\HH)$ obtained this way {\em path-like}. Note that if  $v_{i,j}$ and   $v_{k,l}$ \ $(i<k)$ are   identified, then  the edges between the $i$th and $k$th base clique are joining each  vertex of the $k$th base clique to $v_{i,j}$ except for $v_{k,l}$.

We claim that for this choice, there is a Vandermonde-completable orientation of the identifier edges in $G_2(\HH)$. The proof is by induction on $n$. The case $n=1$ is trivial. For the inductive step, suppose that we have proved the claim for $n$ and let us consider a path-like auxiliary graph $G=G_2(\mathcal{H})$ of a hypergraph $\mathcal{H}$ with $n+1$ edges $F_0, F_1, \ldots ,F_n$, each of size  $n+1$. We call a vertex  $v_{i,j}$ a {\it source} if it forms an edge with  $v_{k,l}$ for some $i<k$ in an identifier spanning path.

\begin{figure}[h!]
    \centering
    \includegraphics[width=0.5\linewidth]{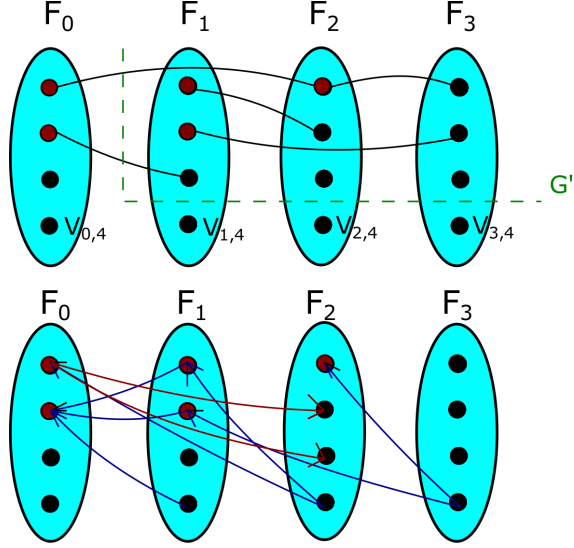}
    \caption{Spanning paths on identified vertices in $G_2(\HH)$ (top) and the orientation of the identifier edges adjacent to $F_0$ or $S$.}
    \label{Fig:3}
\end{figure}

Note that each edge $F_i$ contains a vertex which is not contained in any other edge $F_j$. That is, each base clique contains at least one vertex with no identification to other vertices. Without loss of generality, $v_{1,n+1}\in F_1,\dots,v_{n,n+1}\in F_n$ are such vertices. Let $S=\{v_{1,n+1}, \ldots v_{n,n+1}\}$ and let $\HH'$ be the $n$-uniform linear hypergraph whose edges are $F_1'=F_1\setminus \{v_{1,n+1}\},\dots,F_n'=F_n\setminus \{v_{n,n+1}\}$. Note that the identifier edges of $G_2(\HH)$ induced by the set $F_1'\cup \dots \cup F_n'$ are the identifier edges of $G'=G_2(\HH')$. Hence, by the induction hypothesis, we may orient them in a Vandermonde-completable way. We extend this to a Vandermonde-completable orientation of the identifier edges of $G_2(\HH)$ as follows.

\begin{itemize}
    \item Orient each identifier edge with one endpoint in $S$ towards the other endpoint of the edge (which is necessarily the source). Clearly, the path-like property of $G_2(\mathcal{H})$ implies that the in-degree of any source vertex is increased by at most one, while the in-degree of any non-source vertex is unchanged.
    \item Consider all the identifier edges adjacent to the vertices of $F_0$. Each pair of base cliques $F_0$ and $F_k$ spans either zero or $n$ identifier edges, all of them incident to a single vertex $v_{0,i}$ of $F_0$. Orient these edges $v_{0,i}v_{k,j}$ towards  $v_{0,i}$ if and only if $v_{k,j}$ is a source or $j=n+1$. This does not change the in-degree of the source vertices in $F_1'\cup \dots \cup F_n'$, and it increases the in-degree of any non-source vertex in $F_1'\cup \dots \cup F_n'$ by at most one.
\end{itemize}

It is straightforward to see that the resulting orientation is Vandermonde-completable in the base cliques $F_i$, $i>0$, since each in-degree is increased by at most one and we added a new vertex of in-degree zero to each $F_i$. 
Thus, we only have to confirm that $F_0$ is also  Vandermonde-completable. Since $F_i$ has at most $n-i$ source vertices, any vertex in $F_0$ which is joined to an element of $F_i$ in an identifier spanning tree has in-degree at most $n-i+1$. Moreover, any non-source vertex in $F_0$ has in-degree $0$. Hence, for each $i\geq 1$, the number of vertices in $F_0$ with in-degree at least $n-i+1$ is at most $i$, as required.
\end{proof}

\section{Concluding remarks}

Theorems \ref{multiedges} and \ref{mainP2} suggest that Conjecture \ref{polisejtes} (and, consequently, the Erd\H os--Faber--Lov\'asz conjecture) is likely to hold. 


For some  linear hypergraph families, we can find $(n-1)$-bounded degree monomials which correspond to an (almost) unique orientation of the auxiliary graph, hence their coefficient is non-zero. For example, this is the case when the degree of every vertex in $\HH$ is either $1$ or at least $\sqrt{n}$, or when we have a decent proportion of pairs of hyperedges that do not intersect each other. However, we were unable to extend this approach to be applicable to all linear hypergraphs.



Since the polynomials $P_1$ and $P_2$ have a rather difficult structure and  involve a huge sum of monomials, approaches which provide a simplified sum or an exact formula for the coefficients seem essential to resolve the problem.
As we have seen, Fact \ref{vander} already provides some simplification. We mention that suitable simplified formulae have been obtained in other settings of the Combinatorial Nullstellensatz, see \cite{KP, KNPV, Lason, Uwe}. In these results, the key ingredient was the following  coefficient formula, also mentioned as Quantitative Nullstellensatz.

\begin{lemma}[Coefficient formula]
Let $\mathbb{F}$ be an arbitrary field and 
$P\in \mathbb{F}[x_1, x_2, \dots, x_n]$ a polynomial of degree
$\deg(P)\leq d_1+d_2+\dots+d_n$. 
For arbitrary subsets $C_1, C_2, \dots, C_n$ of $\mathbb{F}$ 
with $|C_i|=d_i+1$, the coefficient of $\prod x_i^{d_i}$ in $P$ is
$$ \sum_{c_1\in C_1} \sum_{c_2\in C_2} \dots \sum_{c_n\in C_n} 
\frac{P(c_1, c_2, \dots, c_n)}{\phi_1'(c_1)\phi_2'(c_2)\dots \phi_n'(c_n)},$$
where $\phi_i(z)= \prod_{c\in C_i}(z-c)$.
\label{interpol}
\end{lemma}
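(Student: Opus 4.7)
The plan is to recognize that the right-hand side is a linear functional on polynomials, and then verify the formula monomial by monomial, reducing everything to the familiar one-variable Lagrange interpolation.

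Specifically, define
\[
L(P)=\sum_{c_1\in C_1}\dots\sum_{c_n\in C_n}\frac{P(c_1,\dots,c_n)}{\phi_1'(c_1)\cdots\phi_n'(c_n)}.
\]
Since $L$ is $\mathbb{F}$-linear in $P$, it suffices to compute $L$ on an arbitrary monomial $\prod x_i^{e_i}$ with $\sum_i e_i\le\sum_i d_i$ and show that $L(\prod x_i^{e_i})=1$ if $(e_1,\dots,e_n)=(d_1,\dots,d_n)$ and $L(\prod x_i^{e_i})=0$ otherwise. The formula will then follow by summing over the monomials appearing in $P$.

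Because the summation factorizes over coordinates (Fubini for finite sums),
\[
L\!\left(\prod_i x_i^{e_i}\right)=\prod_{i=1}^{n}\sum_{c\in C_i}\frac{c^{e_i}}{\phi_i'(c)},
\]
so I reduce to the following one-variable claim: if $|C|=d+1$ and $\phi(z)=\prod_{c\in C}(z-c)$, then for every integer $0\le e\le d$,
\[
\sum_{c\in C}\frac{c^{e}}{\phi'(c)}=\begin{cases}1&e=d,\\ 0&e<d.\end{cases}
\]
This is exactly Lagrange interpolation: any polynomial $p(z)$ of degree at most $d$ satisfies $p(z)=\sum_{c\in C}p(c)\prod_{c'\ne c}\frac{z-c'}{c-c'}$, and comparing the coefficient of $z^d$ gives the leading coefficient of $p$ on the left and $\sum_{c\in C}p(c)/\phi'(c)$ on the right. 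Applying this with $p(z)=z^e$ yields the claim.

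The one potentially fiddly point, which is the main thing to handle carefully, is what happens when some $e_i>d_i$: the single-variable identity only gives $0$ for exponents strictly less than $d$, and says nothing automatic about higher exponents. This is where the degree hypothesis $\sum e_i\le\sum d_i$ is used: if some $e_j>d_j$, then by pigeonhole there is an index $i$ with $e_i<d_i$, and the factor $\sum_{c\in C_i}c^{e_i}/\phi_i'(c)$ already vanishes, forcing the whole product to vanish. Thus the only surviving monomial in $P$ is $\prod x_i^{d_i}$ itself, which contributes $1$, and $L(P)$ equals its coefficient, as required.
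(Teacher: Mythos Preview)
Your argument is correct. The linearity reduction to monomials, the factorization over coordinates, the one-variable Lagrange identity, and the pigeonhole step handling exponents $e_j>d_j$ are all sound; in particular your observation that $\sum_i e_i\le\sum_i d_i$ forces some $e_i<d_i$ whenever the exponent vector differs from $(d_1,\dots,d_n)$ is exactly what makes the proof go through.

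As for comparison: the paper does not supply its own proof of this lemma. It is quoted in the concluding remarks as the ``Quantitative Nullstellensatz'' and attributed to the references \cite{KP, KNPV, Lason, Uwe}. Your proof is essentially the standard one found in those sources (reduce to a product of one-variable sums and invoke Lagrange interpolation), so there is nothing to contrast.
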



Once we have this coefficient formula, we may seek for a suitable Cartesian product set (or grid) $C_1\times~ C_2 \times ~\dots~ \times~ C_n$ on which polynomial  $P$ vanishes in most cases.
 This approach proved to be successful in several different combinatorial problems, see \cite{KNPV}. To reach an analogous goal, one may take a suitable describing polynomial and suitable monomial (rather than a different grid) in order to guarantee vanishing terms in the sum for the coefficient, via Vandermonde-completability.

\bibliographystyle{amsplain}

\end{document}